\newtheorem{theorem}{Theorem}
\newtheorem{proposition}[theorem]{Proposition}
\newtheorem{corollary}[theorem]{Corollary}
\newtheorem{definition}[theorem]{Definition}
\newtheorem*{remark}{Remark}
\newtheorem*{example*}{Example}
\Crefname{conjecture}{Conjecture}{Conjectures}
\theoremstyle{plain}
\theoremstyle{plain}
\author{Philip Cuthbertson and Robert Schneider}
\address{Department of Mathematical Sciences\newline
Michigan Technological University\newline
Houghton, Michigan 49931, U.S.A.}
\email{pecuthbe@mtu.edu}
\address{Department of Mathematical Sciences\newline
Michigan Technological University\newline
Houghton, Michigan 49931, U.S.A.}
\email{robertsc@mtu.edu}
\title[Multimodal sequences and their generating functions]{Multimodal sequences and their\\ generating functions}
\begin{document}
 
\begin{abstract}
We define integer multimodal sequences, which are generalizations of unimodal sequences having multiple local peaks of equal size. The generating functions for multimodal sequences represent novel types of $q$-series that combine generating functions for both integer partitions and  integer compositions. We prove a bijection between multimodal sequences of equal size (sum), and show that multimodal generating functions become finite series at roots of unity like the ``strange'' function of Kontsevich, quantum modular forms, and other examples of this phenomenon in the $q$-series literature.   

\end{abstract}

\maketitle


\section{Introduction: concepts and notations}

\subsection{Introduction}

A multimodal distribution in statistics is a probability distribution with two or more modes (local peaks), by contrast with a unimodal distribution that has one mode, and suggests multiple populations or trends  within the data \cite{stat2, stat1}.  Here we define  {\it integer  multimodal sequences},    natural   generalizations of   unimodal sequences    having multiple   peaks.
\footnote{This study grew out of the authors' unpublished preprint \cite{mixture} about   ``integer mixtures'',  a broad class of integer sequences studied privately by George Beck that  extends unimodal sequences and multicolored partitions in great generality and   includes multimodal sequences. We thank   Beck for suggesting the study of integer mixtures and for sharing      computational examples   (Private communication, 2022 -- 2023).} The generating functions for multimodal sequences represent novel types of $q$-series that combine generating functions for both integer  partitions   and   compositions.

\subsection{Partitions, compositions and unimodal sequences}
Let $\mathbb Z^+$ denote the positive integers. Let $\mathcal P$ denote the set of {\it integer partitions} \cite{Andrews}, finite sequences of positive integers notated in weakly decreasing order; e.g. $(3, 2, 2, 1), (4, 4, 1), (2)\in\mathcal P$. 
Let $\emptyset\in \mathcal P$ denote the empty partition. Let $\lambda=(\lambda_1,\lambda_2, \lambda_3, \dots,$ $\lambda_r),\   \lambda_1\geq\lambda_2\geq\dots\geq\lambda_r\geq 1$, denote a nonempty partition. Let $\ell(\lambda):=r$ denote the partition {\it length} (number of parts). Let $m_i=m_i(\lambda)$ denote the {\it multiplicity} (frequency) of $i$ as a part of the partition. Let $|\lambda|:=\lambda_1+\lambda_2+\dots+\lambda_r$ denote the partition {\it size} (sum of parts). Let $\operatorname{lg}(\lambda)$ denote the {\it largest part} of $\lambda$. Define $\ell(\emptyset)=|\emptyset|=m_i(\emptyset)=\operatorname{lg}(\emptyset)=0$. 
Furthermore, let $\mathcal C$ denote the set of {\it integer compositions} \cite{MacMahon}, finite {\it ordered} sequences of positive integers;  e.g. $(3, 2, 1, 2), (4,4,1),$ $(4, 1, 4)\in\mathcal C$. Let $\emptyset\in \mathcal C$ denote the empty composition.  
Let $\mathcal{U}$ denote  {\it unimodal sequences} \cite{Murty, Stanley}, finite sequences of positive integers that increase from an initial part size (weakly or strictly), attain a peak value with multiplicity $k\geq 1$ (a ``$k$-fold peak''),  then decrease (weakly or strictly) until terminating, possibly with left or right  subsequence empty. {We extend the  partition notations introduced above (length, size, etc.) to   compositions, unimodal sequences and multimodal sequences, with the same meanings and terminology}. 

Let $\mathcal U_k\subset \mathcal U$ denote unimodal sequences having $k$-fold peak. E.g., here is   a unimodal sequence where  we mark the two-fold peak with overlines:
$$(2,3,3,5,\overline{7},\overline{7},6,4,1,1,1)\in\mathcal{U}_2.$$ 
A unimodal sequence with $k$-fold peak can be viewed as a partition written in   increasing order, followed by some number of peak values,   followed by a second partition    in decreasing order. The  minimal  unimodal sequence  with $k$-fold peak is $(\overline{1}, \overline{1}, \dots, \overline{1})$ with $k$ repetitions. 

\subsection{Multimodal sequences}
Let $\mathcal{M}$ denote the set of {\it multimodal sequences}, sequences of positive integers  that {increase} (weakly or strictly) {until they reach a peak value, then oscillate sporadically between lesser values, reach the same peak again, oscillate sporadically, reach the peak, oscillate again, and so on until $k\geq 1$ peaks are attained, then decrease} (weakly or strictly) {until terminating}. We say a multimodal sequence has $k$-fold peak if there are $k$ occurrences of the peak value, in any arrangement.   
We note   $\mathcal U\subset \mathcal{M}$. 

Let $\mathcal{M}_k\subset \mathcal M$ denote  multimodal sequences having $k$-fold peak, $k\geq 1$. E.g., here is a multimodal sequence where we overline the three-fold peak to highlight it:  
\begin{equation}\label{mmseq} (1, 3, 5, \overline{7}, 1, 4,  3,   \overline{7}, 4, 6, 2, 1, 3, \overline{7}, 6,4,2,2, 1)\in \mathcal{M}_3.\end{equation}

A multimodal sequence can be viewed as the concatenation of a left-hand and a right-hand partition with a sequence of intermediate compositions, all separated by peak values.   We note that any of the extremal partitions or intermediate compositions might be empty. Thus a unimodal sequence having $k$-fold peak represents the special case of  a   sequence in $\mathcal M_k$ where {all} the    intermediate compositions are empty, and we have that $\mathcal U_k\subset \mathcal M_k$. The minimal multimodal sequence with $k$-fold peak is also $(\overline{1}, \overline{1}, \dots, \overline{1})$ with $k$ repetitions.

Let $\mathcal{M^{**}}$  denote the set of {\it strongly multimodal sequences} in which the left- and right-hand partitions are  {\it strictly} increasing and decreasing, respectively. Let $\mathcal M^{*}$ denote   {\it semi-strongly multimodal sequences}, for which we require     the left-hand partition to be strictly increasing (i.e., having distinct parts), while the right-most partition is  unrestricted.  
 
\begin{remark}
    Multimodal sequences are    similar to   the multicompositions studied in \cite{HopkinsOuvry}, in that both types of sequences arise from concatenations of integer compositions. 
\end{remark}

\section{Two-variable generating functions}
\subsection{Generating functions for partitions and unimodal sequences}

For  $z,q\in\mathbb C$, recall the $q$-{\it Pochhammer} symbol \cite{Andrews}, defined by the product
\begin{equation}\label{qPoch}
(z;q)_{n}:=\prod_{k=0}^{n-1}(1-zq^k).\end{equation}
As $n\to \infty,$ set  $(z;q)_{\infty}:=\lim_{n\to \infty}(z;q)_{n}$ for $|q|<1$.  

A standard method in partition theory is the use of product-sum generating functions, such as the following two-variable version of Euler's partition generating function product formula (see \cite{Berndt}). 
For $|zq|<1$, we have 
\begin{equation}\label{gen1} 
\frac{1}{(zq;q)_{n}}\  =\  \sum_{\substack{\lambda\in\mathcal P\\ \operatorname{lg}(\lambda)\leq n}} z^{\ell(\lambda)}q^{|\lambda|},\end{equation}
and letting $n\to \infty$, 
\begin{equation}\label{gen2}
\frac{1}{(zq;q)_{\infty}}\  =\  \sum_{\lambda\in \mathcal P}z^{\ell(\lambda)}q^{|\lambda|},\end{equation}
 with the first sum taken over the set of partitions with largest part at most $n$, and the second sum taken over  all  partitions. Under certain substitutions and   with multiplicative factors, expression \eqref{gen2} is well known to be a modular form (see e.g. \cite{Ono_web}). 

Now, note that $zq^n(zq;q)_{n}^{-1}$ is the generating function for partitions $\lambda$ with largest part exactly equal to $n\geq 1$ (weighted by $z^{\ell(\lambda)}$); the $(zq;q)_{n}$ factor in the denominator generates partitions with parts at most $n$, while the $zq^n$ factor in the numerator generates a largest part of size $n$. Then summing over $n \geq 1$ generates all nonempty partitions, and one   has  
\begin{equation}\label{formula2}
1+ \sum_{n\geq 1}\frac{zq^n}{(zq;q)_n}\  =\  \sum_{\lambda\in \mathcal P}z^{\ell(\lambda)}q^{|\lambda|}.
\end{equation}

Analogously, note for  $k\geq 1, n\geq 1,$ that $z^kq^{nk}(zq;q)_{n-1}^{-2}$ generates the set of unimodal sequences having $k$-fold peak of size $n$; one factor $(zq;q)_{n-1}$ in the denominator generates the set of increasing, left-hand partitions with parts at most $n-1$ and the second factor $(zq;q)_{n-1}$ generates the decreasing right-hand partitions with parts at most $n-1$, with the $z$ parameter keeping track of their lengths, while $z^kq^{kn}$ generates $k$ copies of the peak value $n$. Then the generating function for {\it unimodal sequences $\mathcal{U}_k$  having $k$-fold peak}  is 
\begin{equation}
 \sum_{n\geq 1}\frac{z^k q^{kn}}{(zq;q)_{n-1}^2} \  
=\  \sum_{u\in \mathcal{U}_k}z^{\ell(u)}q^{|u|}.
\end{equation}
The generating function for unrestricted unimodal sequences involves a double sum:
\begin{equation}\label{double}
\sum_{k\geq 1} \sum_{n\geq 1}\frac{z^k q^{kn}}{(zq;q)_{n-1}^2} \  
=\  \sum_{u\in \mathcal{U}}z^{\ell(u)}q^{|u|}.
\end{equation}

Likewise, the generating function for {\it strongly} {unimodal sequences $\mathcal{U}_k^{**}$  having $k$-fold peak}, where  the left and right sides are strictly increasing and decreasing, respectively, is 
\begin{equation}
 \sum_{n\geq 1}{z^k q^{kn}}{(-zq;q)_{n-1}^2} \  
=\  \sum_{u\in \mathcal{U}_k^{**}}z^{\ell(u)}q^{|u|}.
\end{equation}
Just as in \eqref{double} above, a double sum will generate  all  strongly unimodal sequences.

Note that unimodal  generating functions are built from partition generating functions.

\subsection{Composition-theoretic $q$-Pochhammer symbol}

We define a bracket notation  that will serve as something of a composition-theoretic analogue of the $q$-Pochhammer symbol \eqref{qPoch}. This will help us to see similarities, as well as highlight dissimilarities, between partition and composition generating functions.

\begin{remark} This subsection draws from combinatorial and analytic ideas proved in \cite{SS1, SS2}. \end{remark}
\begin{definition} For $z,q\in \mathbb C$, set $\{z;q\}_0:=1$ and, for $n\geq 1$,   let \begin{equation*}
\{z;q\}_n:=1-z \sum_{k=0}^{n-1}q^k.\end{equation*}
For $|q|<1$, as $n\to \infty$    let \begin{equation*}
\{z;q\}_{\infty}:=\lim_{n\to \infty}\{z;q\}_n. \end{equation*}
\end{definition}


If $q=1$, then $\{z;1\}_n=1-nz$. For $q\neq 1$,  use the following  identities from geometric series:
\begin{equation}\label{compsymbol}
\{z;q\}_n=\frac{1-q-z+zq^{n}}{1-q}, \  \  \  \  \  \  \  \  \  \  \  \   \{z;q\}_{\infty}=\frac{1-q-z}{1-q},\end{equation}
with the restriction $|q|<1$ in the second case. We note   $\{z;q\}_n=0$ if and only if $q$ is a root of   the polynomial $zx^n-x+(1-z)\in\mathbb C[x]$, and $\{z;q\}_{\infty}=0$ if and only if $q=1-z$.

Unlike the product $(z;q)_{n}$, this   $ \{z;q\}_{n}$ is a sum, so we cannot expect   usual $q$-series analytic techniques \cite{Andrews, Berndt, Fine} to hold. On the other hand, 
  the   sets $\mathcal P$ and $\mathcal C$ enjoy   many combinatorial similarities. In some cases, {\it analytic} similarities also carry over;\footnote{Dualities between partitions and compositions is a motivating  theme of \cite{Keith-S-S, SS1, SS2}.} one can write down   composition generating functions  analogous to equations  \eqref{gen1} and \eqref{gen2}.

\begin{proposition}\label{prop} For $z,q\in \mathbb C,\  |q|<\frac{1}{1+|z|}$, we have 
$$\frac{1}{\{zq;q\}_{n}}\  =\  \sum_{\substack{c\in\mathcal C\\ \operatorname{lg}(c)\leq n}} z^{\ell(c)}q^{|c|},$$
 with the   sum taken over  compositions with largest part at most $n$. Furthermore,  $$\frac{1}{\{zq;q\}_{\infty}}\  =\  \sum_{c\in\mathcal C} z^{\ell(c)}q^{|c|},$$ with the   sum taken over  all  compositions. \end{proposition}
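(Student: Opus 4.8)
The plan is to recognize $1/\{zq;q\}_n$ as an ordinary geometric series whose terms enumerate bounded-part compositions. First I would rewrite the denominator in the convenient form
\begin{equation*}
\{zq;q\}_n = 1 - zq\sum_{k=0}^{n-1}q^k = 1 - z\left(q + q^2 + \cdots + q^n\right),
\end{equation*}
which follows immediately from the definition (or from \eqref{compsymbol}). Writing $S_n := q + q^2 + \cdots + q^n$, the first identity to be proved becomes the assertion that $1/(1 - zS_n) = \sum_c z^{\ell(c)}q^{|c|}$, the sum taken over compositions with largest part at most $n$.

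Next I would expand the geometric series $1/(1 - zS_n) = \sum_{j\ge 0} z^j S_n^{\,j}$ and unfold each power by the distributive law. A monomial in the expansion of $S_n^{\,j}$ arises by selecting one term $q^{c_i}$ with $1 \le c_i \le n$ from each of the $j$ factors, so that
\begin{equation*}
S_n^{\,j} = \sum_{\substack{c=(c_1,\dots,c_j)\\ 1\le c_i\le n}} q^{c_1+\cdots+c_j} = \sum_{\substack{c\in\mathcal C,\ \ell(c)=j\\ \operatorname{lg}(c)\le n}} q^{|c|},
\end{equation*}
where the $j=0$ term is the empty composition contributing $1$. Summing over $j$ and using $z^j = z^{\ell(c)}$ on length-$j$ compositions then collects the right-hand side exactly. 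For the second identity I would either pass to the limit $n\to\infty$ (using $S_n \to q/(1-q)$) or expand $1/\{zq;q\}_\infty = 1/\bigl(1 - zq/(1-q)\bigr)$ directly, noting that $\bigl(q/(1-q)\bigr)^j = (q+q^2+q^3+\cdots)^j$ enumerates all length-$j$ compositions with unrestricted parts.

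The step I expect to be the main obstacle---really the only nonroutine point---is the convergence bookkeeping that makes all of the above manipulations legitimate, and here the hypothesis $|q| < 1/(1+|z|)$ is precisely tailored to the task. Indeed $|S_n| \le \sum_{k\ge 1}|q|^k = |q|/(1-|q|)$, so $|zS_n| \le |z|\,|q|/(1-|q|) < 1$ if and only if $|q|(1+|z|) < 1$; thus the geometric expansion converges, absolutely and uniformly in $n$. Absolute convergence justifies the rearrangement of the double sum over the pair $(j,c)$ into a single sum over $c$, and the uniform bound justifies the termwise passage $n\to\infty$ for the second identity (equivalently, it guarantees $|zq/(1-q)| < 1$). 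I would close by confirming that this single inequality underwrites every interchange of summation and limit invoked above.
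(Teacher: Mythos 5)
Your proposal is correct and is in substance the same argument as the paper's: the paper proves the proposition by citing it as the cases $S=\{1,\dots,n\}$ and $S=\mathbb{Z}^+$ of Proposition 1.1 in \cite{SS2}, whose proof it describes as ``based on the multinomial theorem and geometric series'' --- precisely the geometric-series expansion of $1/(1-zS_n)$ and distributive unfolding of $S_n^{\,j}$ that you write out in full. Your convergence analysis also matches the paper's remark that $|q|<\frac{1}{1+|z|}$ is the sufficient condition (uniform in the part bound $n$), with the sharp condition for a given bound being $|z S_n|<1$.
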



\begin{proof} 
The first identity is the case $S=\{1, 2, 3, \dots, n\}$, and the second identity is the case $S= \mathbb Z^+$,  of Proposition 1.1 in \cite{SS2} which states for $S\subseteq  \mathbb Z^+$, $|q|<\frac{1}{1+|z|}$,  
we have \begin{equation}\label{prop1.5}
\frac{1}{1-z\sum_{n\in S}q^n}\  =\  \sum_{c\in\mathcal C_S} z^{\ell(c)}q^{|c|},\end{equation}
with $\mathcal C_S$ denoting compositions into parts from $S$.   The convergence condition on $q$ is sufficient  for the identities to hold for all $S\subseteq \mathbb Z^+$; for a given subset $S$, the necessary condition is $\left|z\sum_{n\in S}q^n\right|<1$. We refer the reader to the proof of \eqref{prop1.5} in \cite{SS2}, which is based on the multinomial theorem and geometric series.
\end{proof}

At this stage, one might anticipate further generating function analogies between partitions and compositions, since both represent sums of natural numbers.  Is there also, for instance,  a composition $q$-series identity  
$$1+\sum_{n\geq 1}\frac{zq^n}{\{zq;q\}_n}\   \stackrel{?}{=}\  \sum_{c\in \mathcal C}z^{\ell(c)}q^{|c|},$$
 analogous to \eqref{formula2}? It is not quite this simple. Ordered versus unordered parts 
 is a critical combinatorial distinction in proving composition-theoretic $q$-series generating functions.
\begin{proposition} For $z,q\in \mathbb C$ such that $|q|<\frac{1}{1+|z|}$, and writing compositions in the form  $c=(c_1, c_2, c_3, \dots, c_r)\in\mathcal C,\   c_i\geq 1$, we have 
$$1+\sum_{n\geq 1}\frac{zq^n}{\{zq;q\}_n} = \sum_{\substack{c\in\mathcal C\\ c_1 =\operatorname{lg(c)}}}z^{\ell(c)}q^{|c|},$$
with the right-hand sum taken over compositions having largest part size in position $c_1$. \end{proposition}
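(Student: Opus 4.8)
The plan is to prove this by a direct, weight-preserving decomposition of compositions, building on Proposition~\ref{prop} exactly as \eqref{formula2} was built from \eqref{gen1}. First I would read the summand $zq^n/\{zq;q\}_n$ combinatorially. By Proposition~\ref{prop}, the factor $1/\{zq;q\}_n$ is the generating function $\sum z^{\ell(c')}q^{|c'|}$ over compositions $c'$ with $\operatorname{lg}(c')\le n$, while the factor $zq^n$ contributes a single part of size exactly $n$ (the $z$ advancing the length by one, the $q^n$ advancing the size by $n$). Multiplying, $zq^n/\{zq;q\}_n$ is the generating function for compositions $c=(n,c')$ obtained by prepending a part of size $n$ to a composition all of whose parts are $\le n$; equivalently, for nonempty compositions whose first part equals $n$ and is at least as large as every later part.

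Next I would sum over $n\ge1$. The map $c\mapsto(c_1,(c_2,\dots,c_r))$ that strips off the first part is a bijection between nonempty compositions with $c_1=\operatorname{lg}(c)$ and pairs $(n,c')$ with $n\ge1$ and $\operatorname{lg}(c')\le n$; it is weight-preserving, sending $z^{\ell(c)}q^{|c|}$ to $z\,z^{\ell(c')}\cdot q^n q^{|c'|}$. Summing the previous paragraph's identity over all $n\ge1$ therefore produces $\sum_{c} z^{\ell(c)}q^{|c|}$ over all \emph{nonempty} compositions satisfying $c_1=\operatorname{lg}(c)$, and the leading $+1$ accounts for the empty composition $\emptyset$ (with $\ell(\emptyset)=|\emptyset|=0$). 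This yields the claimed right-hand side.

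The step that must be handled with care --- and the reason the right-hand side is restricted, in contrast to the unrestricted partition identity \eqref{formula2} --- is the ordered-versus-unordered distinction flagged in the text. For partitions one writes parts in decreasing order, so prepending the largest part recovers \emph{every} partition; for compositions the order is fixed, so prepending $n$ can only ever produce compositions whose largest part sits in position $c_1$, and one must confirm there is no double counting when the peak value $n$ recurs later within $c'$. The condition $\operatorname{lg}(c')\le n$ permits such ties while still pinning $c_1$ as a largest part, so each qualifying composition arises from a unique $n$ (namely $n=c_1$), giving a genuine bijection. Finally I would record that the hypothesis $|q|<1/(1+|z|)$ is exactly what makes the argument analytic: it bounds $|z\sum_{k\ge1}q^k|<1$, so each $\{zq;q\}_n$ is bounded away from $0$ uniformly in $n$ while $|zq^n|$ decays geometrically, guaranteeing absolute convergence of the series on the left.
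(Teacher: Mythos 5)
Your proof is correct and follows essentially the same route as the paper's: interpret $1/\{zq;q\}_n$ via Proposition~\ref{prop} as generating compositions with parts at most $n$, let $zq^n$ prepend a maximal first part, and sum over $n$ with the $+1$ covering $\emptyset$. The extra care you take with uniqueness of $n=c_1$ (no double counting) and with absolute convergence only makes explicit what the paper's terser argument leaves implicit.
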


\begin{proof}
Consider each summand  $zq^n\{zq;q\}_n^{-1}$ on the left. The denominator generates compositions with parts at most $n$, while $zq^n$ inserts a part of maximal size $n$ into the first (left-most) position; thus $c_1=\operatorname{lg}(c)$ for each composition $c$ in the right-hand sum. 
\end{proof}

We note another generating function analogy that does not hold:  while $(-zq;q)_{n}$ generates partitions into distinct parts at most $n$, $\{-zq;q\}_{n}$ generates compositions into   a single  part at most $n$. See \cite{distinct} for   more on generating functions for compositions into distinct parts. 

\subsection{Generating functions for multimodal sequences}

Recall   that a multimodal sequence can be viewed as a   partition on the left with parts written in   increasing order, followed by a peak value, followed by a   composition into smaller parts than the peak, then another peak value, then another smaller composition, and so on until the final peak value has been attained, then ending with a partition in   decreasing order on the right.

   The generating functions for multimodal sequences form a complex class of $q$-series. 
\begin{theorem}\label{thm1}
For $z,q\in \mathbb C$ such that $|q|<\frac{1}{1+|z|}$, $k\geq 1$, the generating function $M_k(z;q)$ for multimodal sequences $\mathcal M_k$ with $k$-fold peak  is given by
$$M_k(z;q)= \sum_{n\geq 1}\frac{z^k q^{kn}}{\{zq;q\}_{n-1}^{k-1}(zq;q)_{n-1}^2}=\sum_{\mu\in\mathcal{M}_k}z^{\ell(\mu)}q^{|\mu|}.$$
The generating function $M(z;q)$ for unrestricted multimodal sequences $\mathcal M$ is given by
$$M(z;q)=\sum_{k\geq 1}\sum_{n\geq 1}\frac{z^k q^{kn}}{\{zq;q\}_{n-1}^{k-1}(zq;q)_{n-1}^2}=\sum_{\mu\in\mathcal{M}}z^{\ell(\mu)}q^{|\mu|}.$$
 \end{theorem}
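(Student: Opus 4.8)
The plan is to prove both identities combinatorially, by setting up a size- and length-preserving bijection that decomposes each multimodal sequence along its peaks, and then reading off the claimed $q$-series as a product of the partition and composition generating functions already established in \eqref{gen1} and Proposition~\ref{prop}. The structural description of $\mathcal{M}_k$ given just before the statement is exactly the dictionary this bijection makes precise.

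First I would fix the peak value $n\geq 1$ and the peak multiplicity $k\geq 1$, and argue that every $\mu\in\mathcal{M}_k$ whose peak equals $n$ factors \emph{uniquely} as a concatenation
$$\mu \;=\; \bigl(\,\lambda^{L},\ n,\ c^{(1)},\ n,\ c^{(2)},\ \dots,\ n,\ c^{(k-1)},\ n,\ \lambda^{R}\,\bigr),$$
where $\lambda^{L}$ is the block preceding the first occurrence of the peak, $\lambda^{R}$ is the block following the last occurrence, and $c^{(1)},\dots,c^{(k-1)}$ are the $k-1$ blocks lying strictly between consecutive occurrences of $n$ (so there are exactly $k$ copies of $n$ separating $k-1$ intermediate blocks). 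By the definition of $\mathcal{M}_k$, the block $\lambda^{L}$ is weakly increasing and $\lambda^{R}$ weakly decreasing, so each is a partition, recorded in standard weakly decreasing order, with largest part at most $n-1$; each intermediate $c^{(j)}$ is an arbitrary, possibly empty, composition with parts at most $n-1$. Conversely, any choice of two such partitions, $k-1$ such compositions, and $k$ copies of $n$ reassembles into a unique element of $\mathcal{M}_k$ with peak exactly $n$. The essential bookkeeping is that the relevant statistics are additive under this decomposition: $\ell(\mu)$ equals $k$ plus the total number of parts contributed by $\lambda^{L}$, $\lambda^{R}$, and the $c^{(j)}$, while $|\mu|$ equals $kn$ plus the combined sizes of those blocks.

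Next I would convert this bijection into generating functions. Because the blocks are chosen independently once $n$ is fixed, the contribution of all $\mu\in\mathcal{M}_k$ with peak $n$ factors as a product. Specializing \eqref{gen1} with $n$ replaced by $n-1$ gives a factor $(zq;q)_{n-1}^{-1}$ for each of $\lambda^{L}$ and $\lambda^{R}$; specializing Proposition~\ref{prop} with $n$ replaced by $n-1$ gives a factor $\{zq;q\}_{n-1}^{-1}$ for each of the $k-1$ intermediate compositions; and the $k$ peaks contribute $z^{k}q^{kn}$. Multiplying yields the summand $z^{k}q^{kn}\,\{zq;q\}_{n-1}^{-(k-1)}(zq;q)_{n-1}^{-2}$, and summing over all peak sizes $n\geq 1$ produces $M_k(z;q)$, since the sets with distinct peak values are disjoint. (As a sanity check, $k=1$ gives $\{zq;q\}_{n-1}^{0}=1$ and recovers the unimodal generating function, consistent with $\mathcal{M}_1=\mathcal{U}_1$.) The second formula follows by summing over $k\geq 1$, as $\mathcal{M}$ is the disjoint union of the $\mathcal{M}_k$ according to peak multiplicity.

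The main obstacle is analytic rather than combinatorial: one must ensure that all the series in play converge absolutely, so that the factorization of the count into a product of generating functions, and the interchange of the summations over $n$ and $k$, are genuinely justified rather than merely formal. The hypothesis $|q|<\tfrac{1}{1+|z|}$ is precisely the condition under which Proposition~\ref{prop} holds, securing convergence of each composition factor $\{zq;q\}_{n-1}^{-1}$; the same bound forces $|zq|<1$ and hence controls the partition factors from \eqref{gen1}. I would therefore close by verifying that this bound guarantees absolute convergence of the full double sum, legitimizing every rearrangement above.
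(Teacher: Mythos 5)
Your proposal is correct and follows essentially the same route as the paper's own proof: decomposing each multimodal sequence with peak value $n$ into a left partition, $k$ peak copies, $k-1$ intermediate compositions, and a right partition, then multiplying the factors $(zq;q)_{n-1}^{-2}$, $\{zq;q\}_{n-1}^{-(k-1)}$, and $z^kq^{kn}$ from \eqref{gen1} and Proposition~\ref{prop}, summing over $n$, and finally over $k$. Your write-up is simply a more explicit version of the paper's argument, spelling out the uniqueness of the decomposition, the additivity of size and length, and the role of the convergence hypothesis $|q|<\tfrac{1}{1+|z|}$.
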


\begin{proof} We combine aspects of the preceding generating function proofs.  We note   that $M_k(z;q)$ is defined   on $|zq|<  1$ except, for $k\geq 2$, deleting from the domain points $q$   that are roots of  $P_{z,n}(x):=zx^{n+1}-(z+1)x+1=(1-x)\{zx;x\}_n$ for any $n\geq 1$; but   does not      have the interpretation as a multimodal   generating function unless $|q|<\frac{1}{1+|z|}$,  which is  usually a sufficient   convergence condition on composition-theoretic series of the shape $\sum_{c\in \mathcal C}$    \cite{SS1, SS2}, and excludes    the  polynomial roots   that give rise to poles of order $k-1$.  

In the $n$th summand of  $M_k(z;q)$, one $(zq;q)_{n-1}$ factor in the  denominator generates the {partitions} with parts $<n$ to the left of the multimodal sequence, and  a second  $(zq;q)_{n-1}$ factor  generates {partitions} with parts $<n$ to the right, including $\emptyset\in \mathcal P$. The $\{zq;q\}_{n-1}^{k-1}$ factor  generates the  $k-1$ compositions into parts $<n$ that lie between the  peaks, including $\emptyset\in \mathcal C$; while $z^kq^{kn}$ generates the $k$-fold peaks of size $n$. For the second identity,   
sum both sides over $k\geq 1$ to generate the entire set $\mathcal M$. 
\end{proof}

Recall that we define  a {\it strongly} multimodal sequence by requiring the left-hand partition to be strictly increasing and the right-hand partition to be strictly decreasing;  whereas a {\it semi-strongly} multimodal sequence requires   the left-hand partition to be strictly increasing but places no restriction on the right-hand partition.

\begin{theorem}\label{strongthm}
For $z,q\in \mathbb C$ such that $|q|<\frac{1}{1+|z|}$,    
the generating function $M^{**}(z;q)$ for   strongly multimodal sequences $\mathcal M^{**}$ is given by 
$$M^{**}(z;q)=\sum_{k\geq 1}\sum_{n\geq 1}\frac{z^k q^{kn}(-zq;q)_{n-1}^2}{\{zq;q\}_{n-1}^{k-1}}=\sum_{\mu\in\mathcal{M}^{**}}z^{\ell(\mu)}q^{|\mu|}.$$
The generating function $M^{*}(z;q)$ for   semi-strongly multimodal sequences $\mathcal M^{*}$ is given by
$$M^{*}(z;q)=\sum_{k\geq 1}\sum_{n\geq 1}\frac{z^k q^{kn}(-zq;q)_{n-1}}{\{zq;q\}_{n-1}^{k-1}(zq;q)_{n-1}}=\sum_{\mu\in\mathcal{M}^{*}}z^{\ell(\mu)}q^{|\mu|}.$$
 \end{theorem}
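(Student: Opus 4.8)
The plan is to mirror the proof of Theorem~\ref{thm1} essentially verbatim, replacing the two ``plain'' partition factors $(zq;q)_{n-1}$ by the appropriate distinct-parts factors $(-zq;q)_{n-1}$ according to which extremal partition is required to be strict. The underlying decomposition of a multimodal sequence is unchanged: each $\mu\in\mathcal M_k$ splits as a left-hand partition written in increasing order, then $k$ peak values of some common size $n$, interleaved by $k-1$ intermediate compositions into parts $<n$, and terminated by a right-hand partition written in decreasing order. Fixing the peak size $n\geq 1$, the $z^kq^{kn}$ factor records the $k$ peaks (contributing $k$ to the length and $kn$ to the size), while the $k-1$ intermediate compositions into parts $\leq n-1$ are generated, exactly as in Theorem~\ref{thm1}, by $\{zq;q\}_{n-1}^{k-1}$ via \Cref{prop}. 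Summing over $n$ and then over $k$ reproduces the stated double sums, provided the extremal partition factors are correct.

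First I would treat the \emph{strongly} multimodal case. Here both extremal partitions must have distinct parts, all of size at most $n-1$. The standard identity $(-zq;q)_{n-1}=\sum_{\lambda} z^{\ell(\lambda)}q^{|\lambda|}$, the sum over partitions into distinct parts with largest part $\leq n-1$, supplies each extremal factor; since one such partition sits on the left (read increasingly) and one on the right (read decreasingly), we get the product $(-zq;q)_{n-1}^2$ in the numerator. Multiplying by the peak factor $z^kq^{kn}$ and the intermediate-composition factor $\{zq;q\}_{n-1}^{-(k-1)}$, then summing over $n\geq 1$ and $k\geq 1$, yields exactly $M^{**}(z;q)$, and the term-by-term correspondence shows the $z^{\ell(\mu)}q^{|\mu|}$ identity.

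For the \emph{semi-strongly} multimodal case, only the left-hand partition is required to be strictly increasing, so its generating factor is again $(-zq;q)_{n-1}$, whereas the right-hand partition is unrestricted and is generated by $(zq;q)_{n-1}^{-1}$ as in \eqref{gen1}. This produces the numerator factor $(-zq;q)_{n-1}$ together with one $(zq;q)_{n-1}$ in the denominator, matching $M^{*}(z;q)$ precisely; the rest of the argument is identical. The only genuine point requiring care --- and the step I expect to be the main obstacle --- is justifying convergence and the interchange of the double summation on the region $|q|<\tfrac{1}{1+|z|}$, so that the formal term-by-term bijection actually represents an analytic identity. The composition factor $\{zq;q\}_{n-1}^{-1}$ is the delicate one, since it is a reciprocal of a \emph{sum} rather than a product and can vanish at roots of $P_{z,n}$; as in Theorem~\ref{thm1} the hypothesis $|q|<\tfrac{1}{1+|z|}$ guarantees $\bigl|zq\sum_{j=0}^{n-2}q^j\bigr|<1$, keeping $\{zq;q\}_{n-1}$ bounded away from $0$ and controlling the geometric-type tails, so the same convergence bookkeeping used there carries over unchanged.
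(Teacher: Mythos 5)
Your proposal is correct and takes essentially the same approach as the paper: the authors likewise prove this by proceeding exactly as in the proof of Theorem~\ref{thm1}, with each factor $(-zq;q)_{n-1}$ in the numerator generating a distinct-parts extremal partition, defining the $k$-fold-peak series $M_k^{**}(z;q)$ and $M_k^{*}(z;q)$ and summing over $k\geq 1$. Your added convergence remarks (boundedness of $\{zq;q\}_{n-1}$ away from zero when $|q|<\frac{1}{1+|z|}$) are sound and merely make explicit what the paper leaves implicit.
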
 

\begin{proof}
    To prove these identities,  proceed exactly as in  the proof of the second identity in Theorem \ref{thm1}, but in the present cases any factor $(-zq; q)_{n-1}$ in the numerator generates partitions into distinct parts less than $n$. Then for $|q|<\frac{1}{1+|z|}$, we define 
    \begin{flalign} \label{multi1}
       M_k^{**}(z;q)\  =\   \sum_{n\geq 1}\frac{z^kq^{kn}(-zq;q)_{n-1}^2}{\{zq;q\}_{n-1}^{k-1}},\  \  \  \  \  
       M_k^{*}(z;q)\  =\  \sum_{n\geq 1}\frac{z^kq^{kn}(-zq;q)_{n-1}}{\{zq;q\}_{n-1}^{k-1}(zq;q)_{n-1}},
    \end{flalign}
which are two-variable generating functions for strongly unimodal sequences with $k$-fold peak as well as semi-strongly multimodal sequences with $k$-fold peak, respectively. 
Summing these two  series over   $k\geq 1$ generates  the sets $\mathcal M^{**}$ and $\mathcal M^*$, respectively. 
\end{proof}

Along similar lines, one may concatenate partitions and compositions with diverse restrictions, and combine their generating functions, to derive further classes of multimodal sequences and their generating functions. For further reading on partition generating functions, see \cite{Andrews, Berndt, Fine}. For more on composition generating functions, see \cite{Keith-S-S, SS1, SS2}.

\subsection{Additional  composition-theoretic $q$-Pochhammer formulas}

We record a handful of miscellaneous product and summation formulas related to $\{zq;q\}_{n}$ and $\{zq;q\}_{\infty}$. 

 \begin{theorem}\label{productthm}
 For $z,q\in \mathbb C$, if $q$ is not a root of the polynomial $P_{z,n}(x)=zx^{n+1}-(z+1)x+1\in \mathbb C[x]$ for any $n\geq 1$, we have 
 \begin{flalign*} \{zq;q\}_n\  =\  \prod_{i=1}^{n}\bigg(1-\frac{zq^i}{\{zq;q\}_{i-1}}\bigg),\  \  \  \  \  \  \  \  
 \frac{1}{\{zq;q\}_n}\  =\  \prod_{i=1}^{n}\bigg(1+\frac{zq^i}{\{zq;q\}_{i}}\bigg).\end{flalign*}
If $|q|<\frac{1}{1+|z|}$, then  both identities hold  when $n\to \infty$.
\end{theorem}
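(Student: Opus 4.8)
The plan is to reduce both product formulas to telescoping products by exploiting a one-term recurrence satisfied by the composition-theoretic symbol. First I would record the elementary identity
\[
\{zq;q\}_i - \{zq;q\}_{i-1} = -zq^i, \qquad i \geq 1,
\]
which is immediate from the definition $\{zq;q\}_i = 1 - z\sum_{j=1}^{i} q^j$: passing from index $i-1$ to $i$ merely appends the single term $-zq^i$. Rearranging this gives the two forms I will actually use,
\[
\{zq;q\}_i = \{zq;q\}_{i-1}\left(1 - \frac{zq^i}{\{zq;q\}_{i-1}}\right), \qquad \{zq;q\}_{i-1} = \{zq;q\}_i\left(1 + \frac{zq^i}{\{zq;q\}_i}\right),
\]
each valid as soon as the symbol appearing in the corresponding denominator is nonzero.

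Second, I would justify the non-vanishing of those denominators from the stated hypothesis. Using the factorization $P_{z,n}(x) = (1-x)\{zx;x\}_n$ recorded in the excerpt, for $q \neq 1$ one has $\{zq;q\}_i = P_{z,i}(q)/(1-q)$, so $\{zq;q\}_i = 0$ holds exactly when $P_{z,i}(q) = 0$. Since $P_{z,n}(1) = z - (z+1) + 1 = 0$ for every $n$, the assumption that $q$ is not a root of any $P_{z,n}$ simultaneously rules out $q = 1$ and all other vanishing points, so that $\{zq;q\}_0 = 1, \{zq;q\}_1, \dots, \{zq;q\}_n$ are all nonzero and both products are well defined.

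Third comes the telescoping itself. Multiplying the first rearranged identity over $i = 1, \dots, n$ collapses the product to $\{zq;q\}_n / \{zq;q\}_0 = \{zq;q\}_n$, which is the first formula; multiplying the second rearranged identity over the same range collapses to $\{zq;q\}_0 / \{zq;q\}_n = 1/\{zq;q\}_n$, which is the second. For the $n \to \infty$ statements under $|q| < \frac{1}{1+|z|}$, I would note that this convergence condition already forces $\left|z\sum_{j\geq 1} q^j\right| \leq |z|\frac{|q|}{1-|q|} < 1$, so every $\{zq;q\}_i$ is nonzero and the limit $\{zq;q\}_\infty = \frac{1-q-zq}{1-q}$ is nonzero as well (vanishing would force $q = \frac{1}{1+z}$, hence $|q| = \frac{1}{|1+z|} \geq \frac{1}{1+|z|}$, contradicting the hypothesis). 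The finite identities then pass to the limit by continuity of the partial products.

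The point requiring care—more bookkeeping than genuine obstacle—is precisely this control of the denominators: tying the polynomial hypothesis on $P_{z,n}$ to $\{zq;q\}_i \neq 0$ in the finite case, and separately confirming $\{zq;q\}_\infty \neq 0$ in the limit so that the reciprocal product converges to a finite, nonzero value.
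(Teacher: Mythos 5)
Your proposal is correct and takes essentially the same route as the paper: the recurrence $\{zq;q\}_{i-1}-zq^i=\{zq;q\}_{i}$ (equivalently $\{zq;q\}_{i}+zq^i=\{zq;q\}_{i-1}$) turns each product into a telescoping product collapsing to $\{zq;q\}_n/\{zq;q\}_0$ or its reciprocal, which is exactly the paper's argument. Your added bookkeeping---using $P_{z,n}(x)=(1-x)\{zx;x\}_n$ to tie the root-avoidance hypothesis to the nonvanishing of every denominator, and checking $\{zq;q\}_\infty\neq 0$ under $|q|<\tfrac{1}{1+|z|}$ before passing to the limit---simply makes explicit details the paper leaves implicit.
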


\begin{proof}
Noting   $\{zq;q\}_{i-1}-zq^i=\{zq;q\}_{i}$ and $\{zq;q\}_{0}=1$,  then the first equation in Theorem \ref{productthm} can be interpreted as a telescoping product with all intermediate terms canceling:
\begin{equation}
\{zq;q\}_{n}=\frac{\{zq;q\}_{n}}{\{zq;q\}_{n-1}}\cdot \frac{\{zq;q\}_{n-1}}{\{zq;q\}_{n-2}}\cdot \frac{\{zq;q\}_{n-2}}{\{zq;q\}_{n-3}}  \cdots \frac{\{zq;q\}_{2}}{\{zq;q\}_{1}}\cdot \frac{\{zq;q\}_{1}}{\{zq;q\}_{0}}.
\end{equation}
In the same manner, noting  $\{zq;q\}_{i}+zq^i=\{zq;q\}_{i-1}$ leads to  the second equation.
\end{proof}

It is a consequence of these product formulas, filtered through standard partition generating function techniques, that the composition-theoretic symbol $\{zq;q\}_{\infty}^{\pm 1}$ can be expressed in terms of  $q$-series summed over integer partitions.

 \begin{corollary}\label{productcor2}
Let $\mathcal P^*\subset \mathcal P$ denote the set of {\it partitions into distinct parts} (no repetition). For $z,q\in \mathbb C$ such that $|q|<\frac{1}{1+|z|}$, we have
 \begin{flalign*}\{zq;q\}_{\infty}\  &=\  \sum_{\lambda \in \mathcal P^*}\frac{(-z)^{\ell(\lambda)}q^{|\lambda|}}{\prod_{i\in\lambda}\{zq;q\}_{i-1}},\\
 \frac{1}{\{zq;q\}_{\infty}}\  &=\  \sum_{\lambda \in \mathcal P^*}\frac{z^{\ell(\lambda)}q^{|\lambda|}}{\prod_{i\in\lambda}\{zq;q\}_{i}},
  \end{flalign*}
as well as 
$$\frac{1}{\{zq;q\}_{\infty}}\  =\  \sum_{\lambda \in \mathcal P}\frac{z^{\ell(\lambda)}q^{|\lambda|}}{\prod_{i\in\lambda}\{zq;q\}_{i-1}},
$$
with the sums taken over partitions into distinct parts in the first two identities, over unrestricted partitions in the third, and  $\prod_{i\in\lambda}$  taken over the parts $i\in \mathbb Z^+$ of partition $\lambda$.
\end{corollary}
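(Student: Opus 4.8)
The plan is to obtain all three identities by expanding the two infinite products furnished by Theorem \ref{productthm}, using the standard dictionary between infinite products and partition-indexed sums; the hypothesis $|q|<\frac{1}{1+|z|}$ will supply the absolute convergence needed to rearrange freely. The substantive work having already been done in Theorem \ref{productthm}, the remaining steps are essentially formal.

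First I would read off the two identities indexed by $\mathcal P^*$. Taking the limiting form of the first product formula, $\{zq;q\}_{\infty}=\prod_{i=1}^{\infty}\big(1-\frac{zq^i}{\{zq;q\}_{i-1}}\big)$, and expanding the product by choosing from each factor either $1$ or $-\frac{zq^i}{\{zq;q\}_{i-1}}$, the nontrivial choices occur on a finite set $\{i_1<\cdots<i_r\}$, i.e. a partition $\lambda\in\mathcal P^*$ into distinct parts. The associated term is $\prod_{i\in\lambda}\big(-\frac{zq^i}{\{zq;q\}_{i-1}}\big)=\frac{(-z)^{\ell(\lambda)}q^{|\lambda|}}{\prod_{i\in\lambda}\{zq;q\}_{i-1}}$, using $\sum_{i\in\lambda}i=|\lambda|$ for distinct parts; summing over $\lambda\in\mathcal P^*$ gives the first identity. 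The second identity follows in the same way from the limiting second product formula $\frac{1}{\{zq;q\}_{\infty}}=\prod_{i=1}^{\infty}\big(1+\frac{zq^i}{\{zq;q\}_{i}}\big)$, the only differences being the sign and the index shift $i-1\mapsto i$ in the symbol.

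For the third identity I would instead evaluate the right-hand side directly as a sum over all of $\mathcal P$. Grouping a partition $\lambda$ by the multiplicities $m_v=m_v(\lambda)$ of its part values $v\geq1$, its summand factors as $\prod_{v\geq1}\big(\frac{zq^v}{\{zq;q\}_{v-1}}\big)^{m_v}$, so the full sum factors into a product of geometric series $\prod_{v\geq1}\sum_{m\geq0}\big(\frac{zq^v}{\{zq;q\}_{v-1}}\big)^{m}=\prod_{v\geq1}\big(1-\frac{zq^v}{\{zq;q\}_{v-1}}\big)^{-1}$. The recurrence $\{zq;q\}_{v-1}-zq^v=\{zq;q\}_v$ (already exploited in Theorem \ref{productthm}) rewrites each factor as $\frac{\{zq;q\}_{v-1}}{\{zq;q\}_v}$, and the product telescopes to $\frac{\{zq;q\}_0}{\{zq;q\}_{\infty}}=\frac{1}{\{zq;q\}_{\infty}}$, as claimed.

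The one point requiring care — and the step I would treat as the crux — is justifying these rearrangements. Here the hypothesis $|q|<\frac{1}{1+|z|}$ is exactly what is needed: writing $\{zq;q\}_n=1-z\frac{q(1-q^n)}{1-q}$, one has $\big|z\frac{q(1-q^n)}{1-q}\big|\leq\frac{|z||q|}{1-|q|}<1$, so $|\{zq;q\}_n|\geq 1-\frac{|z||q|}{1-|q|}>0$ uniformly in $n$ (and likewise $\{zq;q\}_{\infty}\neq0$). Consequently $\big|\frac{zq^i}{\{zq;q\}_{i-1}}\big|$ is bounded by a constant times $|q|^i$, which yields absolute convergence of both infinite products, of the partition sums, and of the geometric series in the third identity, legitimizing every interchange above. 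With this uniform lower bound in hand, the remaining manipulations are purely formal.
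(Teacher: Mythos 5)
Your proposal is correct and follows essentially the same route as the paper: both deduce all three identities from the product formulas of Theorem \ref{productthm} together with the standard expansions $\prod_{j\geq 1}(1+a_jq^j)=\sum_{\lambda\in\mathcal P^*}q^{|\lambda|}\prod_{i\in\lambda}a_i$ and $\prod_{j\geq 1}(1-b_jq^j)^{-1}=\sum_{\lambda\in\mathcal P}q^{|\lambda|}\prod_{i\in\lambda}b_i$, which the paper cites from the literature and you re-derive inline (running the third identity from sum back to product rather than from product to sum). Your explicit uniform bound $|\{zq;q\}_n|\geq 1-\frac{|z||q|}{1-|q|}>0$ under $|q|<\frac{1}{1+|z|}$ is a sound and slightly more detailed justification of the rearrangements than the paper's brief appeal to the convergence condition avoiding the polynomial roots.
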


\begin{proof}
Let $n\to \infty$ in Theorem \ref{productthm} when $|q|<(1+|z|)^{-1}$, recalling this  restriction on $q$ avoids the polynomial roots specified in Theorem \ref{productthm}, and apply the identity $\prod_{j\geq 1}(1+a_j q^j) = \sum_{\lambda \in \mathcal P^*}q^{|\lambda|}\prod_{i \in \lambda}a_i$ (see e.g. \cite{Schneider_zeta}) to the right-hand products, to give the first two equations. Apply  $\prod_{j\geq 1}(1-b_j q^j)^{-1} = \sum_{\lambda \in \mathcal P}q^{|\lambda|}\prod_{i \in \lambda}b_i$ (e.g.  \cite{Schneider_zeta}) in the reciprocal of the initial product for $\{zq;q\}_{\infty}$ in Theorem \ref{productthm}, for the third equation.
\end{proof}

\section{Bijections between multimodal sequences of fixed size}

\subsection{Multimodal bijections} As we noted previously, the composition-theoretic symbol $\{z;q\}_n$ is a less convenient analytic object than $(z;q)_{n}$; it is not subject to many of the   factoring and cancellation methods that give rise to $q$-series generating function identities. 

However,  multimodal sequences are also partially composed of partitions and these are rich with set-theoretic bijections. For instance, it is well known that the number of partitions of size $N\geq 1$ into distinct parts is equal to the number of partitions of size $N$ into odd parts. Semi-strongly multimodal sequences enjoy a similar bijection.


For $x\in\mathbb R$, let $\lfloor x \rfloor$ denote the  usual {\it floor function}.

\begin{theorem}\label{bijection}
    For $N\geq 1$, the number of semi-strongly multimodal sequences of size $N$ is equal to the number of multimodal sequences of size $N$ with the following properties:
    \begin{enumerate}[(i)]
        
        \item given  peak(s) of size $n$, the left-hand partition consists entirely of odd parts $<n$; 

        \item given  peak(s) of size $n$, the right-hand partition has unrestricted parts of size $\leq \lfloor n/2 \rfloor$ while parts   greater than $\lfloor n/2 \rfloor$ appear with multiplicity exactly equal to one. 
    \end{enumerate} 
\end{theorem}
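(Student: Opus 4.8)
The plan is to reduce the claimed equinumerosity to a single finite product identity for each peak size and then prove that identity as a truncated form of Euler's distinct-parts/odd-parts theorem. Since size is the only statistic being matched, I would specialize to $z=1$ and work with the size generating functions; this is essential, because the underlying bijection (Glaisher's map) does not preserve the number of parts, so the variable $z$ cannot survive. By Theorem~\ref{strongthm} the semi-strongly generating function is $M^*(1;q)=\sum_{k\ge1}\sum_{n\ge1} q^{kn}(-q;q)_{n-1}\big/\big(\{q;q\}_{n-1}^{k-1}(q;q)_{n-1}\big)$, in which, for fixed peak size $n$ and number of peaks $k$, the factor $q^{kn}$ records the $k$ peaks, $\{q;q\}_{n-1}^{-(k-1)}$ records the $k-1$ intermediate compositions into parts $<n$, the factor $(q;q)_{n-1}^{-1}$ records the unrestricted right-hand partition, and $(-q;q)_{n-1}$ records the distinct left-hand partition.

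Next I would assemble the generating function of the target set peak-by-peak in the same way. The crucial point is that conditions (i) and (ii) restrict only the left- and right-hand partitions; the peaks and the intermediate compositions remain unrestricted and therefore contribute the identical factors $q^{kn}$ and $\{q;q\}_{n-1}^{-(k-1)}$. Condition (i) replaces the left factor by $\prod_{\text{odd }j<n}(1-q^j)^{-1}$, and condition (ii) replaces the right factor by $\prod_{j\le\lfloor n/2\rfloor}(1-q^j)^{-1}\prod_{\lfloor n/2\rfloor<j<n}(1+q^j)$. Because every other factor agrees term-by-term in $(n,k)$, the two full generating functions coincide if and only if, for every $n\ge1$,
\[
\frac{(-q;q)_{n-1}}{(q;q)_{n-1}}=\frac{1}{\prod_{\text{odd }j<n}(1-q^j)}\cdot\frac{\prod_{\lfloor n/2\rfloor<j<n}(1+q^j)}{\prod_{j\le\lfloor n/2\rfloor}(1-q^j)}.
\]

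To prove this identity I would factor $(q;q)_{n-1}=\prod_{\text{odd }j<n}(1-q^j)\cdot\prod_{\text{even }j<n}(1-q^j)$, the even subproduct being $\prod_{i}(1-q^{2i})$ over those $i$ with $2i<n$. Dividing $(-q;q)_{n-1}=\prod_{j<n}(1+q^j)$ by this even subproduct and writing $1-q^{2i}=(1-q^i)(1+q^i)$, the low factors $1+q^i$ cancel, leaving a denominator $\prod_{i}(1-q^i)^{-1}$ over the small parts (the unrestricted small right-hand parts), a surviving numerator $\prod(1+q^j)$ over the large parts (the distinct large right-hand parts), and the untouched odd denominator $\prod_{\text{odd }j<n}(1-q^j)^{-1}$ (the odd left-hand partition). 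This is exactly the finite truncation of Euler's identity $\prod_{j\ge1}(1+q^j)=\prod_{\text{odd }j\ge1}(1-q^j)^{-1}$, in which the infinite cancellation of even factors is now only partial. Summing the resulting per-$n$ identity over $n$ and $k$ yields $M^*(1;q)$ equal to the generating function of the target set, hence the two counts agree.

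I expect the boundary bookkeeping to be the main obstacle. In the infinite product all even factors cancel cleanly, but under truncation at the peak value $n$ the cancellation stops at the largest even index below $n$, so the location of the cutoff in condition (ii) is parity-sensitive: the place where the even subproduct terminates behaves differently according to whether $n$ is even or odd, and reconciling this with the floor $\lfloor n/2\rfloor$ appearing in (ii) is the delicate step that must be carried out carefully for both parities. As an alternative presentation I would record the bijective reading of the same computation: Glaisher's map carries the distinct left-hand partition to an odd partition with parts $<n$, and the precise failure of this map to be surjective under truncation is compensated by forcing the large right-hand parts to be distinct, which is the combinatorial content of conditions (i)--(ii).
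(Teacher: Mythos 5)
Your approach is the same as the paper's: specialize to $z=1$ in Theorem \ref{strongthm}, compare the two generating functions term-by-term in $(n,k)$, split $(q;q)_{n-1}$ into its odd-index and even-index factors, and cancel via $1-q^{2i}=(1-q^i)(1+q^i)$. However, your central per-$n$ identity is false whenever $n$ is even, and this is not deferred bookkeeping that can be ``carried out carefully'' -- it cannot be repaired while the cutoff is $\lfloor n/2\rfloor$. The cancellation you describe pairs each even index $2i\leq n-1$ with the low index $i$, so the unrestricted small parts it produces are exactly those $i$ with $2i<n$, i.e.\ $i\leq\lfloor (n-1)/2\rfloor$, and the surviving distinct factors $(1+q^j)$ are exactly those with $2j\geq n$. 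For odd $n$ this coincides with your $\lfloor n/2\rfloor$ reading, but for even $n$ it does not: the part $n/2$ must be restricted to multiplicity one, not left unrestricted. Concretely, at $n=2$ your identity asserts $(1+q)/(1-q)=1/(1-q)^2$, which is false; in general, for even $n$ your right-hand side differs from the left-hand side by exactly the factor $1-q^n$. The discrepancy is combinatorially real, not an artifact of grouping terms: at $N=4$ there are $7$ semi-strongly multimodal sequences, while $8$ sequences satisfy (i)--(ii) read with the $\lfloor n/2\rfloor$ cutoff, since all three peak-$2$ sequences $(1,1,\overline{2})$, $(1,\overline{2},1)$, $(\overline{2},1,1)$ are then admitted, versus only two semi-strong peak-$2$ sequences. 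So your ``if and only if'' reduction, followed by the claim that the per-$n$ identity holds, proves a false chain of equalities.

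What you have actually uncovered is an off-by-one error in the theorem statement itself. The paper's own proof performs precisely your computation but keeps the cutoff $\lfloor(n-1)/2\rfloor$ throughout: it arrives at the summands $q^{kn}\prod_{\lfloor(n-1)/2\rfloor<i\leq n-1}(1+q^i)\big/\big(\{q;q\}_{n-1}^{k-1}\prod_{\text{odd }j\leq n-1}(1-q^j)\prod_{i\leq\lfloor(n-1)/2\rfloor}(1-q^i)\big)$ and then identifies this series with the count of sequences satisfying (i)--(ii); that identification is where statement and proof silently part ways for even $n$. The computation (yours and theirs) proves the corrected statement in which condition (ii) reads: parts $\leq\lfloor (n-1)/2\rfloor$ are unrestricted and parts $>\lfloor (n-1)/2\rfloor$ (equivalently, parts $\geq\lceil n/2\rceil$) have multiplicity exactly one. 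With that correction your argument goes through verbatim and is the paper's argument; your instinct that the parity of $n$ is where the delicacy lies was exactly right, but the resolution is to amend the cutoff in the statement, not to force the $\lfloor n/2\rfloor$ identity. A lesser point: even in the corrected setting, your ``only if'' direction (equality of the double sums forces termwise equality) is not justified and is not needed; the termwise identity is what supplies the refined, peak-size-preserving correspondence.
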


\begin{proof}
   Let $m^*(N)$ denote the number of semi-strongly multimodal sequences of size equal to $N\geq 1$, and let $m^{\#}(N)$ denote the number of multimodal sequences of size $N$ with left-hand partition having property ($i$) in the statement of the theorem and right-hand partition having property ($ii$). Setting $z=1$ in  Theorem \ref{strongthm},  then for $|q|<1/2$ the generating function for semi-strongly multimodal sequences can be rearranged to give 
\begin{flalign}
\sum_{N\geq 1}m^*(N)q^N \  &=\  \sum_{k\geq 1}M_k^*(1;q)=\  \sum_{k\geq 1}\sum_{n\geq 1}\frac{ q^{kn}(-q;q)_{n-1}}{\{q;q\}_{n-1}^{k-1}(q;q)_{n-1}}\\
\nonumber &=\  \sum_{k\geq 1}\sum_{n\geq 1}\frac{ q^{kn}\prod_{i\leq \lfloor \frac{n-1}{2}\rfloor}(1+q^i)  \prod_{ \lfloor \frac{n-1}{2}\rfloor<i\leq n-1}(1+q^i)}{\{q;q\}_{n-1}^{k-1} \prod_{\substack{j\leq n-1\\ \text{$j$ is odd}}}(1-q^j) \prod_{i\leq \lfloor \frac{n-1}{2}\rfloor}(1-q^{2i})}\\
\nonumber &=\  \sum_{k\geq 1}\sum_{n\geq 1}\frac{ q^{kn}  \prod_{ \lfloor \frac{n-1}{2}\rfloor<i\leq n-1}(1+q^i)}{\{q;q\}_{n-1}^{k-1} \prod_{\substack{j\leq n-1 \\ \text{$j$ is odd}}}(1-q^j) \prod_{i\leq \lfloor \frac{n-1}{2}\rfloor}(1-q^{i})}\\
\nonumber &=\  \sum_{N\geq 1}m^{\#}(N)q^N.
\end{flalign}
 Comparing coefficients in the left-  and right-hand power series proves the theorem. \end{proof}

\begin{remark}
    For $k=1$, note $M_1^{*}(1;q)$ is the generating function for unimodal sequences having one-fold peak and with   left-hand subsequence   composed of distinct parts; these are called  ``semi-strictly convex sequences'' in \cite{Andrews2}. Thus semi-strictly convex sequences   of size $N$ are in bijection with the set of one-fold unimodal sequences such that     (i) and (ii) apply. 
\end{remark}

\section{Finite $q$-series formulas at roots of unity}\label{gen}

\subsection{Unimodal rank generating functions at roots of unity} Multimodal sequences are natural generalizations of unimodal sequences. The {\it rank} of a unimodal sequence is defined (see \cite{BOPR}) as the number of terms (parts) to the right of the peak minus the number of terms to the left.  
For $|q|<1$, $z$ a certain root of unity, the $q$-series generating functions for unimodal ranks enjoy    interesting analytic behaviors as $q$ approaches   roots of unity. 

Let $\mathcal U^{**}$ denote the set of {\it strongly unimodal sequences with single peak} (i.e., one-fold peak) such that the left- and right-hand subsequences are strictly increasing and strictly decreasing, respectively, and let $\operatorname{rk}(u)$ denote the rank of  $u\in\mathcal U$  a unimodal sequence. The  rank generating function for strongly unimodal sequences with single peak   \cite{BOPR},  
\begin{equation}\label{unigen}
U(z;q)=\sum_{n=1}^{\infty}q^{n} (-z^{-1}q;q)_{n-1} (-zq;q)_{n-1} =\sum_{u\in \mathcal U^{**}}z^{\operatorname{rk}(u)}q^{|u|}
\end{equation}
with $|q|<1$, is connected  to  mock modular forms, partial theta functions, quantum modular forms and other functions in the universe of modular forms theory \cite{BFR, BOPR,  Lovejoy, Schneider_jtp}.  
 
In \cite{BOPR}, it is proved that $U(\pm i; q)$ is a third order mock theta function and $U(-1; q)$ is a quantum modular form. A key property used in \cite{BOPR} is that $U(-1;q)$ becomes finite at   roots of unity in a particular sense, much like the ``strange'' function of Kontsevich (see \cite{Zagier}),  viz. if we set $q=\zeta_{m}=e^{2 \pi\operatorname{i}/m}$ then $1-q^m=1-\zeta_{m}^m=0$, thus $(\zeta_m;\zeta_m)_{M}=0$ for all $M\geq m$ to yield (after a change of indices) a truncated series 
\begin{equation}\label{unigen2}
U(-1;\zeta_m):=    \lim_{q\to \zeta_m} U(-1; q)\  =\  \sum_{n=0}^{m-1} \zeta_m^{n+1} (\zeta_m; \zeta_m)_{n}^2,
\end{equation}
where   $q\to \zeta_m$ radially from inside the unit disk; and  likewise as $q$ approaches $\zeta_m^j,\  j\in\mathbb Z$. Analogous identities hold for strongly unimodal sequences with $k$-fold peak. 
 
 It is counterintuitive that a series like $U(-1; q)$ becomes both finite in magnitude and in summation length as $q$ approaches the boundary of its domain of convergence {\it at discrete points on the unit circle}. Generically we expect  $q$-series to diverge at the unit circle. 
 
\subsection{Multimodal generating functions at roots of unity} This  kind of ``finiteness at roots of unity'' behavior has been  proved  in the literature to extend to  quantum modular forms, ``strange'' functions, certain mock theta functions, and other $q$-series; see e.g. \cite{Bringmann, BFR, Rolen, Finite, Schneider_jtp}. The generating functions for multimodal sequences represent a special class of $q$-series, mixing  together generating functions for integer partitions as well as compositions.  How do $q$-series involving the composition-theoretic generating function $\{zq;q\}_n^{-1}$ behave at roots of unity? Do they enjoy similar finiteness behavior? 

Throughout this subsection of the paper, we consider $M_k(z;q), M_k^{**}(z;q)$ and $M_k^{*}(z;q)$ as    complex-valued functions in two complex variables rather than as generating functions. Then the convergence condition $|zq|< 1$ suffices as we do not need  combinatorial interpretations, so long as  $q$ avoids the roots of $zx^{n+1}-(z+1)x+1,\  n\geq 1,$   for $k\geq 2$. 
Moreover, setting $\zeta_m=e^{2\pi \operatorname{i}/m}$,  
if $A(q)$ is a function valid for $|q|<1$ as $q\to \zeta_m^j, \  j\in\mathbb Z,$     we define 
\begin{equation}
    A(\zeta_m^j):=\lim_{q\to \zeta_m^j} A(q)
\end{equation}
if the limit exists, where $q\to \zeta_m^j$ radially from within the unit disk, as in   \eqref{unigen2} above. 

It turns out that specializations of multimodal generating functions  enjoy finiteness behavior similar to   $U(-1;q)$, quantum modular forms, and   other functions    noted above. 

\begin{theorem} Let $\zeta_m:=e^{2\pi \operatorname{i}/m},\  m\in \mathbb Z^+$, and for $k\geq 1$ define $M_k^{**}(z;q)$ and $M_k^{*}(z;q)$ as in equation \eqref{multi1}  but with the      conditions  that $|zq|<  1$ and that if $k\geq 2$ then $q$ is not a root of the polynomial $P_{z,n}(x)=zx^{n+1}-(z+1)x+1\in\mathbb C[x]$ for any $n\geq 1$. Set $\zeta:=\zeta_m^j$ for $j\in\mathbb Z$ such that there exists a radial line segment $(1-t)\zeta, \  t\in (0,  \varepsilon),$ that is free of roots of $P_{z,n}(x)$ for some $\epsilon>0$. If $m$ is an even value, then we have the finite evaluations 
 \begin{flalign*}
     M_k^{**}(1;\zeta)&\  =\   \sum_{n=1}^{m/2}\frac{\zeta^{kn}(-\zeta;\zeta)_{n-1}^2}{\{\zeta;\zeta\}_{n-1}^{k-1}},\\
     M_k^{*}(1;\zeta)&\  =\   \sum_{n=1}^{m/2}\frac{\zeta^{kn}(-\zeta;\zeta)_{n-1}}{\{\zeta;\zeta\}_{n-1}^{k-1}(\zeta;\zeta)_{n-1}}.
     \end{flalign*}
\end{theorem}

\begin{proof}
Set $\zeta=\zeta_m^j$. If   $m$ is even,  then $\zeta^{m/2}=e^{\pi\operatorname{i}}=-1$ and $(-\zeta, \zeta)_{n-1}=0$  in the numerator of every term $n>m/2$, giving both identities. There is one fine point to the second identity: there are poles arising from the $(\zeta;\zeta)_{n-1}$ factor in the denominators of the summands for all $n > m$; in each   term,  these are canceled by a zero from the $(-\zeta;\zeta)_{n-1}$  factor  in the numerator   of {twice} the order of the pole from the denominator,  as $q\to \zeta=\zeta_m^j$ radially. The stated condition that an open radial line segment $(1-t)\zeta, \  t\in (0,  \varepsilon),$ exists in a neighborhood of $\zeta$ ensures that $q$ can approach $\zeta$ along a radial path.  
\end{proof}


The preceding limit formulas are finite because of a   vanishing factor in the numerator; some $q$-series   enjoy finiteness at roots of unity   without  this kind of  vanishing factor, for less obvious reasons. In Lemma 9 of \cite{Schneider_jtp}, the second author of this paper gives a quite general formula showing finiteness type phenomena for broad classes of infinite series.  Below, we prove an extension of \cite[Lemma 9]{Schneider_jtp} in order to give  a finite formula for $M_k (z; q)$. 

\begin{theorem}\label{lemma}
Suppose $\phi\colon \mathbb Z^+ \to \mathbb C$ is a periodic function of period $m\in\mathbb Z^+,\  m\neq 1$, i.e., $\phi(mj+r)=\phi(r)$ for all $j\in \mathbb Z$. Define $f\colon \mathbb Z^+ \to \mathbb C$ by 
the product
\begin{equation*}
f(n):=\prod_{i=1}^{n}\phi(i).
\end{equation*}
Let $g\colon \mathbb Z^+ \to \mathbb C$ be a periodic function of the same period $m\neq 1$. Define the summatory function $F_g(n)$ by $F_g(0):=0$ and, for $n\geq 1$,
\begin{equation*}
F_g(n):=\sum_{j=1}^{n}f(j)g(j),\  \  \  \  \  \    F_g(\infty):=\lim_{n\to \infty}F_g(n)\  \text{if the limit exists}.
\end{equation*}
Then the following formulas hold true:
\begin{enumerate}[(i)]
\item \label{periodic3} For $0\leq r <m$ we have
\begin{equation*}
F_g(mj+r)=\frac{1-f(m)^j}{1-f(m)}F_g(m)\  +\  f(m)^j F_g(r).
\end{equation*}
\item \label{periodic4}
For $|f(m)|<1$ we have  
\begin{equation*}
F_g(\infty)=\frac{F_g(m)}{1-f(m)}.
\end{equation*}
\end{enumerate}
\end{theorem}  
  
\begin{proof} 
We proceed similarly to the proof of \cite[Lemma 9]{Schneider_jtp}. We will require $m>1$ or else the proof is trivial. Begin by noting that since $\prod_{i=1}^m \phi(i)=\prod_{i=mt+1}^{m(t+1)}\phi(i),\  t\geq 0$, then
\begin{equation}\label{periodic5}
f(mj)=\prod_{i=1}^{mj}\phi(i)=\left(\prod_{i=1}^{m}\phi(i)\right)^j=f(m)^j,
\end{equation}
by the periodicity of $\phi$. Then by the definition of $F_g(n)$   together with (\ref{periodic5}),   we  rewrite
\begin{flalign}
&F_g(mj+r)\\ \nonumber&=\sum_{i=1}^{m}f(i)g(i)+\sum_{i=m+1}^{2m}f(i)g(i)+\sum_{i=2m+1}^{3m}f(i)g(i)+\dots\\ \nonumber &    \  \  \ \  \  \  \  \  \  \  \  \  \  \  \  \  \  \  \  \  \  \  \  \  \  \  \  \  \  \  \  \  \  \  \  \  \  \  \  \  \  \  \  \  +\sum_{i=m(j-1)+1}^{mj}f(i)g(i)+\sum_{i=mj+1}^{mj+r}f(i)g(i)\\
\nonumber &=\left(1+f(m)+f(m)^2+...+f(m)^{j-1}\right)\sum_{i=1}^{m}f(i)g(i)+f(m)^j\sum_{i=1}^{r}f(i)g(i).
\end{flalign}
Recognizing  that $1+f(m)+f(m)^2+...+f(m)^{j-1}$ is a finite geometric series proves  (\ref{periodic3}); if $|f(m)|<1$,      as $j\to\infty$ the sum is a convergent infinite geometric series, proving (\ref{periodic4}).
\end{proof}

\begin{remark}
    Lemma 9 of \cite{Schneider_jtp} represents the   $g\equiv 1$ case of Theorem \ref{lemma}.
\end{remark}



By Theorem \ref{lemma}, we can write down a finite formula for $M_k(z;q)$, the generating function  for multimodal sequences with $k$-fold peak, at certain roots of unity.

\begin{corollary}\label{cor} Let $\zeta_m:=e^{2\pi \operatorname{i}/m},\  m>1$, and for $k\geq 1,\  z,q\in \mathbb C,$ define $M_k(z;q)$ as in Theorem \ref{thm1} but with the    conditions that $|q|<  1,$ that if $k\geq 2$ then $q$ is not a root of the polynomial $P_{z,n}(x)=zx^{n+1}-(z+1)x+1\in\mathbb C[x]$ for any $n\geq 1$, that $|z|=1$,\  $z\neq -1$,  and that $|z^m-1|>1$. Set $\zeta:=\zeta_m^j$ for $j\in\mathbb Z$ such that there exists a radial line segment $(1-t)\zeta, \  t\in (0,  \varepsilon),$ that is free of roots of $P_{z,n}(x)$ for some $\epsilon>0$; then we have 
  $$M_k(z;\zeta)\  =\     (z\zeta)^k\  +\  \frac{z^{k-m}(z^m-1)^2}{z^m-2}\sum_{n=1}^{m}\frac{\zeta^{k(n+1)}}{\{z\zeta;\zeta\}_{n}^{k-1}(z\zeta;\zeta)_{n}^2},$$
if the limit exists as $q\to\zeta$ radially for a given choice of $z$. \end{corollary}

\begin{remark}
Translating the conditions on $z$ into a radial   setting using trigonometry, set $z=e^{i\theta}$ such that  $\pi/3<\theta <5\pi/3$ and  $\theta\neq \pi$, with $z$ not an $m$th root of unity.
\end{remark}

\begin{proof}
We re-index the terms of $M_k(z;q)$ in order to conveniently apply  Theorem \ref{lemma}: 
\begin{equation}
    M_k(z;q):= \sum_{n\geq 1}\frac{z^kq^{kn}}{\{zq;q\}_{n-1}^{k-1}(zq;q)_{n-1}^2}\  =\  (zq)^k+\sum_{n\geq 1}\frac{(zq)^k q^{kn}}{\{zq;q\}_{n}^{k-1}(zq;q)_{n}^2},
\end{equation}
noting the leading $(zq)^k$ term in the right-hand expression will not enjoy   periodic behavior in parameter $n$ such as the theorem exploits. 
Then set $\zeta=\zeta_m^j$ under the stated condition on $j$, and make the following substitutions into Theorem \ref{lemma}:   Set $\phi(i)=\zeta^k (1-z\zeta^{i})^{-2}$, which is periodic in the parameter $i$ with period equal to $m$, as required by the theorem; thus $f(n)=\prod_{1\leq i\leq n}\phi(i)=\zeta^{kn}(z\zeta; \zeta)_{n}^{-2}.$ Set \begin{equation}\label{eq}
    g(n) =\frac{(z\zeta)^k}{\{z\zeta;\zeta\}_{n}^{k-1}}= (z\zeta)^k \left(\frac{1-\zeta}{1- (z+1)\zeta+z\zeta^{n+1}}\right)^{k-1}
\end{equation} using identity  \eqref{compsymbol}. Thus $g(n)$ is   periodic in parameter $n$ with period   equal to $m\in\mathbb Z^+$ except in the case $m=1$; this   can be seen from the definition $\{zq;q\}_n=1-z\sum_{1\leq i\leq n} q^i$ since $q^{mr}+q^{mr+1}+q^{mr+2}+\dots+q^{m(r+1)-1}=0$ if $q=\zeta_m^j$ for all $m>1,\  r\geq 0$. But in the $m=1$ case,  $\{z;1\}_n=1-nz$ which is not periodic in $n$, hence we see the restriction $m>1$ is necessary; in any event, $m\neq 1$ is an assumption in the proof of Theorem \ref{lemma}. 

The factor $(z\zeta;\zeta)_n^{-1}$ is singular  for every root of unity $\zeta=\zeta_m^j$  if $z=1$, $n\geq m$, and for even order roots of unity $\zeta$ if $z=-1$, $n\geq m/2$. In addition, if $z=-1$, one can see that $\{z\zeta; \zeta\}_n^{-1}$ is singular at every root of unity $\zeta=\zeta_m^j$, $m>1$, when $n\equiv -1\  (\operatorname{mod} m)$. 
Hence we require $z\neq \pm 1$.  Now the corollary follows     from equation (ii) of Theorem \ref{lemma},  using the fact   $(X;\zeta_m^j)_m=1-X^m$, thus $f(m)=\zeta^{km}(z\zeta;\zeta)_m^{-2}=(1-z^m)^{-2}$. Then   $|f(m)|<1$ under the hypothesis $|z^m-1|>1$,    satisfying the convergence condition for (ii); in particular, note  that this implies $z$ cannot equal   any $m$th root of unity. However, we also need to satisfy the convergence condition  $|zq|<  1$; this is compatible with the preceding convergence conditions if we make the additional  requirement $|z|=1$.   With a little trigonometry one sees these conditions on $z$ are equivalent to setting $z=e^{i\theta}$ such that  $\pi/3<\theta <5\pi/3$,  \  $\theta\neq \pi$, with $z$  not an $m$th root of unity, as we remarked above.

Then a little algebra using Theorem \ref{lemma}  yields the corollary, assuming the limit as $q\to\zeta=\zeta_m^j$ exists for given choices of $z, m, j$. 
In \cite[Section 6]{Watson}, Watson investigates    limits of $q$-series as $q$ approaches roots of unity, and gives analytic methods for proving if limits like this exist. The reader is   pointed to the proof of Example 10 in \cite{Schneider_jtp}, which proves a    somewhat similar limit formula involving $q$-Pochhammer symbols by following Watson's steps closely. In our current case we also require an open radial line segment  to be free of roots of $P_{z,n}(x)=zx^{n+1}-(z+1)x+1,\  n\geq 1,$ as $q\to\zeta$ when $k\geq 2$ to ensure $q$   can approach $\zeta=\zeta_m^j$ via a radial path, which depends on the choices of $z, m$ and $j$. 
\end{proof}

\begin{remark}
    An interesting implication of   Theorem \ref{lemma} is that  any 
{$q$-series  enjoying finiteness at an $m$th root of unity based on  \cite[Lemma 9]{Schneider_jtp}  will  also  enjoy this property when a periodic factor of period $m\geq 2$ such as $\{z\zeta_m^j; \zeta_m^j\}_{n}^{-1}$} is inserted in the summands, so long as appropriate analytic conditions are   satisfied. 
\end{remark}

\section{Further questions}
This paper studies $q$-series mixing partition-theoretic and  composition-theoretic components.  As we noted previously,   partition generating functions and unimodal   generating functions are well known to connect to modular forms, mock theta functions, quantum modular forms, and other functions having remarkable analytic properties. Composition-theoretic generating functions also make  connections to modular forms (see e.g. \cite{Keith-S-S, SS2}). 

Recalling that multimodal sequences generalize unimodal sequences, how far do the analytic analogies   between unimodal and multimodal  generating functions extend? Does the unimodal rank statistic   extend to multimodal sequences in a meaningful way? Do multimodal sequences and their generating functions make connections to modular forms  theory comparable to  the connections made by   unimodal rank generating functions?

\section*{Acknowledgments}
We are thankful to George Beck for inspiring this research project; 
as well as George E. Andrews,   
Brian Hopkins, 
William J. Keith,  Ken Ono,  
C\'{e}cile Piret,   Larry Rolen, 
Andrew V. Sills, and Hunter Waldron for conversations that informed this work.  In particular, we are grateful to Andrew V. Sills for   combinatorial advice as well as noting corrections.   

\end{document}